\newtheorem{theorem}{Theorem}
\newtheorem{corollary}{Corollary}
\newtheorem{example}{Example}
\numberwithin{equation}{section}
\begin{document}
\title[{Sheffer-Appell polynomial sequence: A matrix approach}]
{Differential equation and recurrence relations of the Sheffer-Appell polynomial sequence: A matrix approach}

\author[H. M. Srivastava, Saima Jabee and Mohammad Shadab]
{H. M. Srivastava, Saima Jabee and Mohammad Shadab$^{*}$}

\address{H. M. Srivastava: Department of Mathematics and Statistics.
University of Victoria,
Victoria, British Columbia V8W 3R4,
Canada; and Department of Medical Research,
China Medical University Hospital,
China Medical University,
Taichung 40402, Taiwan, Republic of China.}
\email{harimsri@math.uvic.ca}

\address{Saima Jabee: Department of Applied Sciences and Humanities,
Faculty of Engineering and Technology,
Jamia Millia Islamia (A Central University),
New Delhi 110025, India.}
\email{saimajabee007@gmail.com}

\address{Mohammad Shadab: Department of Applied Sciences and Humanities,
Faculty of Engineering and Technology,
Jamia Millia Islamia (A Central University),
New Delhi 110025, India.}
\email{shadabmohd786@gmail.com}

\subjclass[2010]{Primary 15A15, 15A24, 33C45; Secondary 65Q30.}

\keywords{Sheffer-Appell polynomial sequences; Pascal functional;
Wronskian matrices; Differential equation; Recurrence relations,
Generalized Laguerre polynomials; Miller-Lee type Appell
polynomials; Orthogonal polynomialds.}

\thanks{*Corresponding author}

\begin{abstract}
Motivated by the effective impact
of the Pascal functional
and the Wronskian matrices,
we investigate several identities
and differential equation
for the Sheffer-Appell polynomial
sequence by using matrix algebra.
The matrix approach, which we
have used in this article, is convenient
to derive the generating functions of
the Sheffer-Appell polynomial sequence.
By means of examples, we apply and also
illustrate our results to
an extended class of polynomial sequences.
\end{abstract}

\maketitle

\section{introduction}
Sequences of polynomials play an important r\^{o}le
in many problems of pure and applied mathematical sciences such as those
occurring in approximation theory, statistics, combinatorics and analysis
(see, for example, \cite{Rosen, Roman1,Roman2,Roman3}).
The class of Sheffer sequences is one of the most important
classes of polynomial sequences. A polynomial sequence
$\{s_{n}(x)\}_{n=0}^{\infty}$ is called a Sheffer polynomial sequence
\cite{Bucchianico,Dattoli,Roman2,Sheffer2}
if and only if its generating function has the following form:
\begin{eqnarray}\label{eq(d1)}
A(y)e^{xH(y)}=\sum_{n=0}^{\infty}s_{n}(x)\;\frac{y^{n}}{n!},
\end{eqnarray}
where
\begin{eqnarray}
A(y)=A_{0}+A_{1}y+\cdots,\nonumber
\end{eqnarray}
and
\begin{eqnarray}
H(y)=H_{1}y+H_{2}y^2+\cdots,\nonumber
\end{eqnarray}
with $A_0 \neq 0$ and  $H_1 \neq 0$.

Let us recall an alternate definition of the Sheffer sequences
in terms of a pair of generating functions $(l(y),h(y))$
(see, for example, \cite{Lehmer}):\\

Let $h(y)$ be a delta series and let $l(y)$ be an invertible series, defined as follows:
\begin{eqnarray}\label{eq(d2)}
h(y)=\sum_{n=0}^{\infty}h_{n}\;\frac{y^{n}}{n!} \qquad (h(0)=0; \; h(1)\neq 0)
\end{eqnarray}
and
\begin{eqnarray}\label{eq(d3)}
l(y)=\sum_{n=0}^{\infty}l_{n}\;\frac{y^{n}}{n!} \qquad (l(0)\neq 0).
\end{eqnarray}
Then there exists a unique sequence of Sheffer polynomials $s_{n}(x)$
satisfying the orthogonality conditions:
\begin{eqnarray}\label{eq(d4)}
\langle l(y)h(y)^k|s_n(x)\rangle=n!\delta_{n,k} \qquad(\forall\; n,k \geqq 0),
\end{eqnarray}
where $\delta_{n,k}$ is the Kronecker delta.\\

Roman \cite[p. 18, Theorem 2.3.4]{Roman2} introduced the
exponential generating function of $s_n(x)$ as follows:
\begin{eqnarray}\label{eq(d5)}
\frac{1}{h^{-1}(y)}\;e^{xh^{-1}(y)}=\sum_{n=0}^{\infty}s_{n}(x)\; \frac{y^{n}}{n!}.
\end{eqnarray}
The Sheffer sequence for the pair $(l(y), y)$ is called an Appell sequence for $l(y)$.
In fact, Roman \cite{Roman2} characterized Appell sequences in several ways:\\

\{$A_{n}(x)\}_{n\in\mathbb{N}}$ is an Appell set if either
\begin{eqnarray}
\frac{d}{dx}\big(\alpha_n(x)\big)=n \alpha_n(x) \qquad{n\in\mathbb{N}}\nonumber
\end{eqnarray}
or if there exists an exponential generating function of the form
(see also the recent works \cite{Pinter,HMS1982}):
\begin{eqnarray}\label{eq(d6)}
A(y)e^{xy}=\sum_{n=0}^{\infty}\alpha_{n}(x)\;\frac{y^{n}}{n!},
\end{eqnarray}
where $\mathbb{N}$ denotes the set of positive integers and
\begin{eqnarray}
A(y)=\frac{1}{l(y)}.\nonumber
\end{eqnarray}
We also note that, for $H(y)=y$, the generating function \eqref{eq(d1)}
of the Sheffer polynomials $s_n(x)$ reduces to the generating function
\eqref{eq(d6)} of the Appell polynomials $\alpha_n(x)$.\\

The polynomials defined as the discrete convolution of known polynomials
are used to investigate new families of special functions.
For example, the polynomial $h_{n}^{(A)}(x)$ given by
\begin{eqnarray}\label{eq(d7)}
h_{n}^{(A)}(x)=\sum_{r=0}^{\infty}A_{k}h_{n-k}(x),
\end{eqnarray}
is known as a discrete Appell convolution by
setting $h_{n}(x)=x^n$ in the above equation.\\

In the year 2015, Subuhi {\it et al.} \cite{Subuhi} introduced
the determinantal definition and other properties of the Sheffer-Appell polynomials.
The Sheffer-Appell polynomial sequences are combination of the families of
the Sheffer and the Appell polynomials sequences.\\

Now, in order to recall the definition of the generalized Pascal functional
matrix of an analytic function (see \cite{Yang1}), let
$$\mathcal{F}=\left\{h(y)=\sum_{r=0}^{\infty}\alpha_r\;
\frac{y^r}{r!}\;\Bigg|\;\alpha_r\in\mathbb{C}\right\}$$
be the set of power series possessing the $\mathbb{C}$-algebra. Then
the generalized Pascal functional matrix
$[P_n(h(y))]$, which is a lower triangular matrix of order
$(n+1)\times(n+1)$ for $h(y)\in \mathcal{F}$,
is defined by
\begin{eqnarray}\label{eq(d8)}
P_n[h(y)]_{ij}=\left\{
\begin{array}{ll}
\dbinom{i}{j}h^{(i-j)}(y) & \qquad (i\geqq j)\\
\\
0 & \qquad  (\text{otherwise}),
\end{array}
\right.
\end{eqnarray}
for all $i,j=0,1,2,\cdots,n$. Here $h^{(i)}(y)$ is the $i${th}
order derivative of $h(y)$.\\

We next recall the $n${th} order Wronskian matrix of
several analytic functions $h_{1}(y),h_{2}(y),\cdots,h_{m}(y)$
of order $(n+1)\times m$ as follows:
\begin{eqnarray}\label{eq(d9)}
W_{n}[h_{1}(y),h_{2}(y),\cdots,h_{m}(y)]=
\begin{bmatrix}
h_{1}(y) & h_{2}(y) & h_{3}(y) & \cdots & h_{m}(y) \\[3pt]
h_{1}^{'}(y) & h_{2}^{'}(y) & h_{3}^{'}(y) & \cdots & h_{m}^{'}(y) \\[3pt]
\vdots & \vdots & \vdots & \ddots & \vdots \\[3pt]
h_{1}^{(n)}(y) & h_{2}^{(n)}(y) & h_{3}^{(n)}(y) & \cdots & h_{m}^{(n)}(y) \\
\end{bmatrix}.
\end{eqnarray}
We also record here some properties and relationships
between the Wronskian matrices and the generalized Pascal functional matrices
as they are the main tool of our work (see, for example, \cite{Yang2,Youn}).\\

\noindent
\textbf{Property I.} For $h(y),l(y)\in \mathbb{C}$,
$P_n[h(y)]$ and $W_n[h(y)]$ are linear,
that is,
\begin{eqnarray}
P_n[uh(y)+vl(y)]=uP_n[h(y)]+vP_n[l(y)]\nonumber
\end{eqnarray}
and
\begin{eqnarray}\label{eq(d10)}
W_n[uh(y)+vl(y)]=uW_n[h(y)]+vW_n[l(y)],
\end{eqnarray}
where $u,v\in\mathbb{C}$.\\

\noindent
\textbf{Property II.} For $h(y),l(y)\in \mathbb{C}$,
\begin{eqnarray}\label{eq(d11)}
P_n[h(y)l(y)]=P_n[h(y)]P_n[l(y)]=P_n[l(y)]P_n[h(y)].
\end{eqnarray}

\noindent
\textbf{Property III.} For $h(y),l(y)\in \mathbb{C}$,
\begin{eqnarray}\label{eq(d12)}
W_n[h(y)l(y)]=P_n[h(y)]W_n[l(y)]=P_n[l(y)]W_n[h(y)].
\end{eqnarray}

\noindent
\textbf{Property IV.} For $h(y),l(y)\in \mathbb{C}$,
with $h(0)=0$ and $h'(0)\neq 0$,
\begin{eqnarray}\label{eq(d13)}
W_n[l(h(y))]\big|_{y=0}=W_{n}\left[1,h(y),h^2(y),
\cdots,h^n(y)\right]\big|_{y=0}\; \Omega_{n}^{-1}W_n[l(y)]\big|_{y=0},
\end{eqnarray}
where $0!,1!,\cdots,n!$ are the diagonal entries in the
diagonal matrix given by
$$\Omega_{n}=\text{diag}[0!,1!,2!,\cdots,n!].$$

\section{The Sheffer-Appell polynomial sequence and its differential equation}

He and Ricci (\cite{He}; see also \cite{Sheffer1}) derived some
recurrence relations and differential equation for the Appell polynomial
sequence. Further, Youn and Yang (\cite{Youn}; see also \cite{Aceto})
obtained some identities and differential equation for the Sheffer polynomial
sequence by using matrix algebra.
Here, in this paper, we study some recursive formulas and
differential equation for the Sheffer-Appell polynomial sequence
by using matrix algebra.\\

The Sheffer-Appell polynomial sequence, which is denoted by ${_s}A_{n}(x)$,
is defined as the discrete
Appell convolution of the Sheffer polynomials $s_{n}(x)$.\\
The generating function of the Sheffer-Appell polynomials ${_s}A_{n}(x)$ is
given by
\begin{eqnarray}\label{eq(d14)}
\frac{1}{l(h^{-1}(y))l(y)}\;e^{{xh^{-1}(y)}}
=\sum_{n=0}^{\infty}{_s}A_{n}(x)\;\frac{y^{n}}{n!},
\end{eqnarray}
where $h^{-1}(y)$ is the compositional inverse of $h(y)$, that is, we have
(see \cite{Yang2})
$$h^{-1}(h(y))=(h(h^{-1}(y))=y.$$
Thus, if the following generating function in \eqref{eq(d14)}:
$$\frac{1}{l(h^{-1}(y))h(y)}\;e^{{xh^{-1}(y)}}$$
is analytic, then (by using Taylor's expansion theorem), we obtain
\begin{eqnarray}\label{eq(d15)}
{_s}A_{k}(x)=\frac{d^{k}}{dy^{k}}\left(\frac{1}{l(h^{-1}(y))l(y)}\;
e^{{xh^{-1}(y)}}\right)\Bigg{|}_{y=0}\qquad (k\geqq 0).
\end{eqnarray}
The Sheffer-Appell polynomial sequence ${_s}A_{n}(x)$ in vector form
for the pair $(l(y),h(y))$ is denoted by $\vec{{_s}A_{n}}(x)$ and it is defined by
\begin{eqnarray}\label{eq(d16)}
\vec{{_s}A_{n}}(x)=[{_s}A_{0}(x), {_s}A_{1}(x),\cdots,{_s}A_{n}(x)]^{T},
\end{eqnarray}
which can also be expressed as follows:
\begin{eqnarray}\label{eq(d17)}
\vec{{_s}A_{n}}(x)=[{_s}A_{0}(x), {_s}A_{1}(x),\cdots,{_s}A_{n}(x)]^{T}
=W_{n}\left[\frac{1}{l(h^{-1}(y))l(y)}e^{{xh^{-1}(y)}}\right]\Bigg{|}_{y=0}.
\end{eqnarray}

\noindent
{\bf Lemma.}
{\it Let ${_s}A_{n}(x)$ be the Sheffer-Appell polynomial sequence
for the pair $(l(y),h(y))$. Then}
\begin{align}\label{eq(d18)}
&W_{n}[{_s}A_{0}(x), {_s}A_{1}(x),\cdots,{_s}A_{n}(x)]^{T}
\Omega_{n}^{-1}\notag \\
&\qquad =W_{n}[1,(h^{-1}(y)),(h^{-1}(y))^{2},\cdots,
(h^{-1}(y))^{n}]\big{|}_{y=0}\nonumber\\
&\qquad \qquad \cdot \Omega_{n}^{-1}P_{n}\left[\frac{1}{l(y)}\right]\Bigg{|}_{y=0}\;
P_{n}\left[\frac{1}{l(h(y))}\right]\Bigg{|}_{y=0}P_{n}\left[e^{xy}\right]\Bigg{|}_{y=0}.
\end{align}

\begin{proof}
Let us begin with the equation \eqref{eq(d17)}, that is,
\begin{eqnarray}\label{eq(d19)}
\vec{{_s}A_{n}}(x)=W_{n}\left[\frac{1}{l(h^{-1}(y))l(y)}\;
e^{{xh^{-1}(y)}}\right]\Bigg{|}_{y=0}.
\end{eqnarray}
Applying Property IV in the equation \eqref{eq(d19)}, we get
\begin{eqnarray}\label{eq(d20)}
\vec{{_s}A_{n}}(x)=W_{n}[1,(h^{-1}(y)),(h^{-1}(y))^{2},
\cdots,(h^{-1}(y))^{n}]\Bigg{|}_{y=0}\Omega_{n}^{-1}W_{n}
\left[\frac{1}{l(y)l(h(y))}e^{{xy}}\right]\Bigg{|}_{y=0}.
\end{eqnarray}
In view of the following result:
\begin{eqnarray}\label{eq(d21)}
W_{n}[e^{xy}]\Bigg{|}_{y=0}=[1,\,\,\,x,\,\,\,x^{2},
\cdots,x^{n}]^{T},
\end{eqnarray}
the \eqref{eq(d20)} becomes
\begin{align}\label{eq(d22)}
\vec{{_s}A_{n}}(x)&=W_{n}[1,(h^{-1}(y)),(h^{-1}(y))^{2},\cdots,
(h^{-1}(y))^{n}]\Bigg{|}_{y=0}\nonumber\\
&\qquad \cdot\Omega_{n}^{-1}P_{n}\left[\frac{1}{l(y)}\right]
\Bigg{|}_{y=0}P_{n}\left[\frac{1}{l(h(y))}\right]
\Bigg{|}_{y=0}[1,\,\,\, x,\,\,\, x^{2},\cdots, x^{n}]^{T}.
\end{align}

Now, by taking the $k${th} order derivative of both sides of
the equation \eqref{eq(d22)} with respect to $x$
and dividing the resulting equation by $k!$, we obtain
\begin{align}\label{eq(d23)}
&\frac{1}{k!}[{_s}A_{0}^{(k)}(x), {_s}A_{1}^{(k)}(x),
\cdots,{_s}A_{n}^{(k)}(x)]^{T}\notag \\
&\qquad =W_{n}[1,(h^{-1}(y)),(h^{-1}(y))^{2},\cdots,
(h^{-1}(y))^{n}]\Bigg{|}_{y=0}\nonumber\\
&\qquad \qquad \cdot\Omega_{n}^{-1}P_{n}\left[\frac{1}{l(y)}\right]
\Bigg{|}_{y=0}P_{n}\left[\frac{1}{l(h(y))}\right]
\Bigg{|}_{y=0}\nonumber\\
&\qquad \qquad \cdot\left[0,\,\,\,\cdots\,\,\,,0,\,\,\,1,\,\,\,
\binom{k+1}{k}x,\,\,\,\binom{k+1}{k}x^2,\cdots,
\binom{n}{k} x^{n-k}\right]^T.
\end{align}
Hence, clearly, the right-hand side and left-hand side of
the equation \eqref{eq(d23)} are the $k${th} columns of
$$W_{n}[1,(h^{-1}(y)),(h^{-1}(y))^{2},\cdots,
(h^{-1}(y))^{n}]\Bigg{|}_{y=0}\Omega_{n}^{-1},$$
$$P_{n}\left[\frac{1}{l(y)}\right]\Bigg{|}_{y=0}\;P_{n}
\left[\frac{1}{l(h(y))}\right]\Bigg{|}_{y=0}P_{n}[e^{xy}]
\Bigg{|}_{y=0}$$
and
$$W_{n}[{_s}A_{0}(x), {_s}A_{1}(x),\cdots,{_s}A_{n}(x)]^T \Omega^{-1},$$
respectively. Our proof of the Lemma is thus completed.
\end{proof}

We now state and prove Theorem \ref{thm2.1} below.

\begin{theorem}\label{thm2.1}
The Sheffer-Appell polynomial sequence ${_s}A_{n}(x)\thicksim(l(y),h(y))$
satisfies the following differential equation$:$
\begin{eqnarray}\label{eq(d24)}
\sum_{k=0}^{n}(xa_k+b_k+c_k)\frac{{_s}A_{n}^{(k)}(x)}{k!}-n{_s}A_{n}(x)=0,
\end{eqnarray}
where
$$a_k=\left(\frac{h(y)}{h'(y)}\right)^{(k)}\Bigg|_{y=0}\qquad (k\geqq 0),$$
$$b_k=\left(-\frac{h(y)l'(h(y))}{l(h(y))}\right)^{(k)}\Bigg|_{y=0}
\qquad (k\geqq 0)$$
and
$$c_k=\left(-\frac{h(y)l'(y)}{h'(y)l(y)}\right)^{(k)}\Bigg|_{y=0}
\qquad (k\geqq 0).$$
\end{theorem}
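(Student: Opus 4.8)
The plan is to read \eqref{eq(d24)} as the coefficient form of a single operator identity on the generating function \eqref{eq(d14)}, which I find cleaner than propagating the matrix identity \eqref{eq(d18)} directly. Write $G(x,y)=A(y)\,e^{x h^{-1}(y)}$ with $A(y)=\frac{1}{l(h^{-1}(y))\,l(y)}$, so that $G=\sum_{n\ge0}{_s}A_n(x)\,\frac{y^n}{n!}$, and set $D=\frac{d}{dx}$. The starting remark is that $a_k,b_k,c_k$ are the $k$th derivatives at $0$ of the functions $\widehat a(t)=\frac{h(t)}{h'(t)}$, $\widehat b(t)=-\frac{h(t)\,l'(h(t))}{l(h(t))}$ and $\widehat c(t)=-\frac{h(t)\,l'(t)}{h'(t)\,l(t)}$; hence $\sum_{k\ge0}a_k\frac{D^k}{k!}=\widehat a(D)$, and similarly for $\widehat b(D),\widehat c(D)$. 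Since $y\,\partial_y G=\sum_{n\ge0}n\,{_s}A_n(x)\,\frac{y^n}{n!}$, the assertion \eqref{eq(d24)} is exactly the coefficient-by-coefficient form of the operator identity
\[
\big[x\,\widehat a(D)+\widehat b(D)+\widehat c(D)\big]\,G=y\,\partial_y G,
\]
where the sum in \eqref{eq(d24)} truncates at $k=n$ because $\deg {_s}A_n(x)=n$.

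The one structural fact that makes each operator computable is that $D\,e^{x h^{-1}(y)}=h^{-1}(y)\,e^{x h^{-1}(y)}$, so that $F(D)\,G=F\!\big(h^{-1}(y)\big)\,G$ for every formal power series $F$. Applying this to $\widehat a,\widehat b,\widehat c$ and collapsing the compositions through the identity $h(h^{-1}(y))=y$ gives the three clean expressions $\widehat a(D)G=\frac{y}{h'(h^{-1}(y))}\,G$, $\widehat b(D)G=-\frac{y\,l'(y)}{l(y)}\,G$ and $\widehat c(D)G=-\frac{y\,l'(h^{-1}(y))}{h'(h^{-1}(y))\,l(h^{-1}(y))}\,G$.

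The only delicate term is $x\,\widehat a(D)G$, where multiplication by $x$ does not commute with $D$. I would remove $x$ through the $y$-derivative: since $\partial_y G=\frac{A'(y)}{A(y)}G+x\,(h^{-1})'(y)\,G$ and $(h^{-1})'(y)=1/h'(h^{-1}(y))$, one has $xG=h'(h^{-1}(y))\big[\partial_y G-\frac{A'(y)}{A(y)}G\big]$. Multiplying $\widehat a(D)G$ by $x$ cancels the factor $h'(h^{-1}(y))$ and yields $x\,\widehat a(D)G=y\,\partial_y G-y\,\frac{A'(y)}{A(y)}G$. It then remains to compute $\frac{A'}{A}=-\frac{l'(h^{-1}(y))}{l(h^{-1}(y))\,h'(h^{-1}(y))}-\frac{l'(y)}{l(y)}$ and to recognize that $y\,\frac{A'}{A}\,G$ is exactly $\widehat b(D)G+\widehat c(D)G$; adding these back produces $x\,\widehat a(D)G+\widehat b(D)G+\widehat c(D)G=y\,\partial_y G$, the desired identity.

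I expect the only real obstacle to be the bookkeeping in this third step: handling the non-commutation of $x$ and $D$ and matching the two summands of $y\,A'/A$ against $\widehat b(D)G$ and $\widehat c(D)G$ with the correct signs; the operator reading of the coefficients and the composition collapses are purely formal. For consistency with the matrix framework, I would close by noting that each of $\widehat a(D),\widehat b(D),\widehat c(D)$ is represented on the monomial column $[1,x,\dots,x^n]^T$ by a Pascal-type functional matrix $P_n[\,\cdot\,]\big|_{y=0}$ of the kind appearing in the Lemma, so the derivation lives squarely within the same matrix setting.
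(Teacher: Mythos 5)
Your proof is correct, but it takes a genuinely different route from the paper's. Both arguments rest on the same seed computation: applying $y\,\partial_y$ to the generating function $G(x,y)=\frac{1}{l(h^{-1}(y))l(y)}\,e^{xh^{-1}(y)}$, logarithmic differentiation, and the collapses $h(h^{-1}(y))=y$ and $(h^{-1})'(y)=1/h'(h^{-1}(y))$ --- indeed, your three expressions for $\widehat a(D)G$, $\widehat b(D)G$, $\widehat c(D)G$ are precisely the three summands the paper produces in \eqref{eq(d27)}. The difference lies in how the finite-$n$ statement \eqref{eq(d24)} is extracted from that computation. The paper stays inside its matrix framework: it forms $W_{n}\left[y\frac{d}{dy}G\right]\big|_{y=0}$, evaluates it one way via Property III (so that the $n$th row is $n\,{_s}A_{n}(x)$, as in \eqref{eq(d26)}), rewrites it the other way via Properties IV and III, invokes the Lemma \eqref{eq(d18)} to recognize the Wronskian matrix of the ${_s}A_{k}(x)$'s, and equates the $n$th rows of \eqref{eq(d26)} and \eqref{eq(d30)}. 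You replace all of that machinery with the single umbral-style identity $F(D)G=F(h^{-1}(y))\,G$ (valid because $D\,e^{xh^{-1}(y)}=h^{-1}(y)\,e^{xh^{-1}(y)}$ and $h^{-1}$ has zero constant term, so the composition $F(h^{-1}(y))$ is a legitimate formal power series), plus comparison of coefficients of $y^{n}/n!$, the truncation at $k=n$ being justified by $\deg\,{_s}A_{n}(x)=n$. Your handling of the non-commuting factor $x$ --- solving $\partial_y G=\frac{A'(y)}{A(y)}G+\frac{x}{h'(h^{-1}(y))}G$ for $xG$ and verifying that $y\,\frac{A'(y)}{A(y)}G=\widehat b(D)G+\widehat c(D)G$ with the right signs --- is sound, and it is exactly where the identity closes. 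What each approach buys: yours is shorter and self-contained (no Lemma, no Properties I--IV), and it makes transparent that \eqref{eq(d24)} is nothing but the coefficient-by-coefficient form of $y\,\partial_y G=\left[x\,\widehat a(D)+\widehat b(D)+\widehat c(D)\right]G$; the paper's buys uniformity, since the same Lemma and matrix identities are recycled essentially verbatim in the proofs of Theorems \ref{thm3.1}, \ref{thm3.2} and \ref{thm3.3}, which is the methodological point of the article.
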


\begin{proof}
Let us begin with the following result:
\begin{eqnarray}\label{eq(d25)}
W_{n}\left[y \frac{d}{dy}\left(\frac{e^{xh^{-1}(y)}}{l(h^{-1}(y))l(y)}
\right)\right]\Bigg|_{y=0}.
\end{eqnarray}
On the one hand, by using Property III, we get
\begin{align}\label{eq(d26)}
&W_{n}\left[ y \frac{d}{dy}\left(\frac{e^{xh^{-1}(y)}}
{l(h^{-1}(y))l(y)}\right)\right]\Bigg|_{y=0}\notag \\
&\qquad =P_n[y]\Bigg|_{y=0}\; W_{n}\left[\frac{d}{dy}
\left(\frac{e^{xh^{-1}(y)}}{l(h^{-1}(y))l(y)}\right)\right]\Bigg|_{y=0}
\notag \\
&\qquad =
\begin{bmatrix}
0 & 0 & 0 & \cdots & 0 & 0 & 0 \\
1 & 0 & 0 & \cdots & 0 & 0 & 0 \\
0 & 2 & 0 & \cdots & 0 & 0 & 0 \\
0 & 0 & 3 & \cdots & 0 & 0 & 0 \\
\vdots & \vdots & \vdots & \ddots & \vdots & \vdots & \vdots \\
0 & 0 & 0 & \cdots & 0 & 0 & 0 \\
0 & 0 & 0 & \cdots & n-1 & 0 & 0 \\
0 & 0 & 0 & \cdots & 0 & n & 0 \\
\end{bmatrix}
\begin{bmatrix}
{_s}A_{1}(x)  \\
{_s}A_{2}(x)  \\
{_s}A_{3}(x)  \\
\vdots  \\
{_s}A_{n}(x)  \\
{_s}A_{n+1}(x)  \\
\end{bmatrix}.
\end{align}
Also, on the other hand, we can rewrite the equation \eqref{eq(d25)} as
follows:
\begin{align}\label{eq(d27)}
&W_{n}\left[y \frac{d}{dy}\left(\frac{e^{xh^{-1}(y)}}{l(h^{-1}(y))l(y)}
\right)\right]\Bigg|_{y=0}\nonumber\\
&\qquad=W_{n}\left[ \left(x\frac{h(h^{-1}(y))}{h'(h^{-1}(y))}
-\frac{h(h^{-1}(y))l'(y)}{l(y)}-\frac{h(h^{-1}(y))}{h'(h^{-1}(y))}
\;\frac{l'(h^{-1}(y))}{l(h^{-1}(y))}\right)\right.\notag \\
&\qquad \qquad \left. \cdot \frac{e^{xh^{-1}(y)}}{l(h^{-1}(y))l(y)}
\right]\Bigg|_{y=0}.
\end{align}
Thus, by using Property IV in the equation \eqref{eq(d27)}, we have
\begin{align}\label{eq(d28)}
&W_{n}\left[ y \frac{d}{dy}\left(\frac{e^{xh^{-1}(y)}}{l(h^{-1}(y))l(y)}
\right)\right]\Bigg|_{y=0}\notag \\
&\qquad =W_{n}[1,(h^{-1}(y)),(h^{-1}(y))^{2},\cdots,(h^{-1}(y))^{n}]
\Bigg{|}_{y=0}\Omega_{n}^{-1}\nonumber\\
&\qquad \qquad \cdot W_{n}\left[\left(x\;\frac{h(y)}{h'(y)}
-\frac{h(y)l'(h(y))}{l(h(y))}
-\frac{h(y)}{h'(y)}\frac{l'(y)}{l(y)}\right)\frac{e^{xy}}
{l(y)l(h(y))}\right]\Bigg|_{y=0}.
\end{align}

Next, by using Property III in the equation \eqref{eq(d28)}, we get
\begin{align}\label{eq(d29)}
&W_{n}\left[ y \frac{d}{dy}\left(\frac{e^{xh^{-1}(y)}}
{l(h^{-1}(y))l(y)}\right)\right]\Bigg|_{y=0}\nonumber\\
&\qquad=W_{n}[1,(h^{-1}(y)),(h^{-1}(y))^{2},\cdots,(h^{-1}(y))^{n}]
\Bigg{|}_{y=0}\Omega_{n}^{-1} P_{n}\left[e^{xy}\right]\Bigg{|}_{y=0}\;
P_{n}\left[\frac{1}{l(y)}\right]\Bigg{|}_{y=0}\nonumber\\
&\qquad \qquad \cdot P_n\left[\frac{1}{l(h(y))}\right]\Bigg{|}_{y=0}\;
W_{n}\left[ \left(x\frac{h(y)}{h'(y)}-\frac{h(y)l'(h(y))}{l(h(y))}
-\frac{h(y)}{h'(y)}\frac{l'(y)}{l(y)}\right)\right]\Bigg|_{y=0},
\end{align}
which, by applying the above Lemma, yields
\begin{align*}
&W_{n}\left[ y \frac{d}{dy}\left(\frac{e^{xh^{-1}(y)}}
{l(h^{-1}(y))l(y)}\right)\right]\Bigg|_{y=0}
=W_{n}[{_s}A_{0}(x), {_s}A_{1}(x),\cdots,{_s}A_{n}(x)]^T\notag \\
&\qquad \cdot \Omega^{-1}W_{n}\left[\left(x\;\frac{h(y)}
{h'(y)}-\frac{h(y)l'(h(y))}{l(h(y))}
-\frac{h(y)}{h'(y)}\frac{l'(y)}{l(y)}\right)\right]\Bigg|_{y=0}
\end{align*}
or, equivalently,
\begin{align}\label{eq(d30)}
&W_{n}\left[y \frac{d}{dy}\left(\frac{e^{xh^{-1}(y)}}
{l(h^{-1}(y))l(y)}\right)\right]\Bigg|_{y=0}\nonumber\\
&\qquad =
\begin{bmatrix}
{_s}A_{0}(x) & 0 & 0 & \cdots & 0 \\[3pt]
{_s}A_{1}(x) & \frac{{_s}A_{1}^{'}(x)}{1!} & 0 & \cdots & 0 \\[3pt]
{_s}A_{2}(x) & \frac{{_s}A_{2}^{'}(x)}{1!}
& \frac{{_s}A_{2}^{''}(x)}{2!} & \cdots & 0 \\[3pt]
\vdots & \vdots & \vdots & \ddots & \vdots \\[3pt]
{_s}A_{n}(x) & \frac{{_s}A_{n}^{'}(x)}{1!}
& \frac{{_s}A_{n}^{''}(x)}{2!} & \cdots & \frac{{_s}A_{n}^{(n)}(x)}{n!} \\
\end{bmatrix}
\begin{bmatrix}
xa_0+b_0+c_0  \\
xa_1+b_1+c_1  \\
xa_2+b_2+c_2  \\
\vdots  \\
xa_n+b_n+c_n  \\
\end{bmatrix}.
\end{align}

Finally, upon equating the $n${th} rows of \eqref{eq(d26)} and \eqref{eq(d30)},
we obtain the desired result \eqref{eq(d24)} asserted by Theorem \ref{thm2.1}.
\end{proof}

By setting $l(y)=1$ and $b_{k}=c_{k}=0$ $(\forall\; k\geqq 1)$ in Theorem \ref{thm2.1},
we get the following corollary.

\begin{corollary} \label{cor2.1}
Let ${_q}A_{n}(x)\thicksim(1,h(y))$ be the associated polynomial sequence.
Then
\begin{eqnarray}\label{eq(d31)}
x\sum_{k=0}^{n}a_k\frac{{_q}A_{n}^{(k)}(x)}{k!}-n{_q}A_{n}(x)=0.
\end{eqnarray}
\end{corollary}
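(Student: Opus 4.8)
The plan is to obtain Corollary \ref{cor2.1} as the direct specialization $l(y)=1$ of Theorem \ref{thm2.1}, so that essentially no new machinery is required beyond checking how the coefficients degenerate.

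First I would verify that, under $l(y)=1$, the Sheffer-Appell sequence ${_s}A_{n}(x)\thicksim(l(y),h(y))$ collapses to the associated sequence ${_q}A_{n}(x)\thicksim(1,h(y))$. Setting $l\equiv 1$ in the generating function \eqref{eq(d14)} gives $l(h^{-1}(y))=l(y)=1$, so the left-hand side reduces to $e^{xh^{-1}(y)}$, which is precisely the generating function of the associated sequence; hence ${_s}A_{n}(x)={_q}A_{n}(x)$ for every $n\geqq 0$, and likewise for all derivatives in $x$.

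Next I would recompute the three families of coefficients appearing in \eqref{eq(d24)}. Because $l(y)=1$ is constant, $l'(y)\equiv 0$, and therefore also $l'(h(y))=0$. Substituting into the definitions, the numerators of $b_k$ and $c_k$ vanish identically while the denominators $l(h(y))=l(y)=1$ remain nonzero, so no $0/0$ indeterminacy arises; this yields $b_k=c_k=0$ for all $k\geqq 0$. The coefficient $a_k=\left(h(y)/h'(y)\right)^{(k)}\big|_{y=0}$ involves no occurrence of $l$ and is thus untouched by this choice.

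Finally, inserting $b_k=c_k=0$ into \eqref{eq(d24)} reduces each summand factor $xa_k+b_k+c_k$ to $xa_k$, and replacing ${_s}A_{n}$ by ${_q}A_{n}$ gives
$$\sum_{k=0}^{n} x\,a_k\,\frac{{_q}A_{n}^{(k)}(x)}{k!}-n\,{_q}A_{n}(x)=0;$$
pulling the constant $x$ outside the sum produces exactly \eqref{eq(d31)}. There is no genuine obstacle in this argument: the only point deserving (minor) care is confirming that the vanishing of $l'$ annihilates both $b_k$ and $c_k$ without creating an indeterminate ratio, which the nonvanishing constant denominators guarantee.
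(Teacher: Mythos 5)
Your proposal is correct and follows exactly the route the paper itself takes: the paper obtains Corollary \ref{cor2.1} by setting $l(y)=1$ (whence $b_k=c_k=0$) in Theorem \ref{thm2.1}, which is precisely your specialization argument. Your additional checks --- that the generating function \eqref{eq(d14)} collapses to $e^{xh^{-1}(y)}$ and that the vanishing of $l'$ kills $b_k$ and $c_k$ with no indeterminacy --- are sound and merely make explicit what the paper leaves implicit.
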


In its special case when  $a_k=(1)_{k}$,
$b_{k}=(\lambda+1)((2)_{k}-(1)_{k})$ and $c_{k}=(\lambda+1)[(3)_{k}-(4)_{k}]$,
Theorem \ref{thm2.1} would apply to the Laguerre polynomials as follows.

\begin{corollary} \label{cor2.2}
Let
$$L^{(\lambda)}_{n}(x)\thicksim\left((1-y)^{-\lambda-1},\frac{y}{y-1}\right)$$
be the generalized Laguerre polynomial of degree
$n$ in $x$ and with the index (or order) $\lambda$.
Then
\begin{eqnarray}\label{eq(d32)}
\sum_{k=1}^{n}\binom{n}{k}k!\left(x-\frac{k(k-1)(k+4)
(\lambda+1)}{6}\right){_L}A_{n-k}(x)=n\,{_L}A_{n}(x).
\end{eqnarray}
\end{corollary}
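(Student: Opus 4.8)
The plan is to specialize Theorem~\ref{thm2.1} to the pair $(l(y),h(y))=\big((1-y)^{-\lambda-1},\,y/(y-1)\big)$ and then turn the differential equation \eqref{eq(d24)} into the stated recurrence by exploiting the Appell degree-lowering property of ${_L}A_{n}(x)$. First I would record the two structural facts that drive the whole computation. Since $h(0)=0$, each of the generating expressions $\frac{h(y)}{h'(y)}$, $-\frac{h(y)l'(h(y))}{l(h(y))}$ and $-\frac{h(y)l'(y)}{h'(y)l(y)}$ carries a factor that vanishes at the origin, so that $a_{0}=b_{0}=c_{0}=0$ and the $k=0$ summand of \eqref{eq(d24)} drops out. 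Moreover, because ${_L}A_{n}(x)$ is an Appell-type sequence, its derivatives lower the degree by
\[
{_L}A_{n}^{(k)}(x)=\frac{n!}{(n-k)!}\,{_L}A_{n-k}(x)=k!\binom{n}{k}{_L}A_{n-k}(x).
\]
Substituting this relation into \eqref{eq(d24)} cancels the factor $1/k!$ and converts the differential equation into a finite convolution sum over ${_L}A_{n-k}(x)$, which is already the shape of \eqref{eq(d32)}.

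Next I would insert the specialized coefficients $a_{k}=(1)_{k}$, $b_{k}=(\lambda+1)\big((2)_{k}-(1)_{k}\big)$ and $c_{k}=(\lambda+1)\big((3)_{k}-(4)_{k}\big)$ and collapse them by Pochhammer algebra. Writing $(1)_{k}=k!$, $(2)_{k}=(k+1)!$, $(3)_{k}=\tfrac12(k+2)!$ and $(4)_{k}=\tfrac16(k+3)!$, one finds $b_{k}=(\lambda+1)\,k\cdot k!$, while pulling $(k+2)!/6$ out of $c_{k}$ gives $c_{k}=-(\lambda+1)\,k(k+2)!/6$. Adding these and extracting $k!$ from $k!-\tfrac16(k+2)!$ leaves the quadratic $6-(k+1)(k+2)$, which factors cleanly as $-(k-1)(k+4)$, so that
\[
b_{k}+c_{k}=-(\lambda+1)\,\frac{k(k-1)(k+4)}{6}\,k!.
\]
Combining this with $x\,a_{k}=x\,k!$ produces $x\,a_{k}+b_{k}+c_{k}=k!\big(x-\tfrac{(\lambda+1)k(k-1)(k+4)}{6}\big)$, which is exactly the coefficient occurring in \eqref{eq(d32)}.

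Assembling the two steps, the surviving part of \eqref{eq(d24)} reads
\[
\sum_{k=1}^{n}\binom{n}{k}\,k!\left(x-\frac{(\lambda+1)k(k-1)(k+4)}{6}\right){_L}A_{n-k}(x)=n\,{_L}A_{n}(x),
\]
which is the assertion \eqref{eq(d32)}. I expect the only genuine obstacle to lie in the middle step: one must check that the Pochhammer differences telescope to the single cubic $k(k-1)(k+4)$, and in particular that $6-(k+1)(k+2)$ factors as $-(k-1)(k+4)$; the remaining manipulations are direct substitutions. A secondary point that should be verified explicitly is the vanishing of the $k=0$ term, since it is precisely this vanishing that allows the sum to begin at $k=1$ and leaves $n\,{_L}A_{n}(x)$ — rather than $(n-x)\,{_L}A_{n}(x)$ — on the right-hand side; for this one relies on the defining expressions for $a_{0},b_{0},c_{0}$ (all proportional to $h(y)$ at $y=0$) rather than on the closed forms, which are intended only for $k\geqq 1$.
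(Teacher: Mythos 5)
Your derivation follows the same route as the paper (whose entire ``proof'' of Corollary~\ref{cor2.2} is the assertion of those coefficient values followed by substitution into Theorem~\ref{thm2.1}), and both your Pochhammer algebra and your observation that $a_{0}=b_{0}=c_{0}=0$ are correct. The proof nevertheless fails at the step you treat as routine: the degree-lowering identity ${_L}A_{n}^{(k)}(x)=k!\binom{n}{k}\,{_L}A_{n-k}(x)$ is false for this sequence. A Sheffer-Appell sequence is an Appell sequence only when $h(y)=y$; for the Laguerre pair one has $h^{-1}(y)=y/(y-1)$, so differentiation in $x$ multiplies the generating function \eqref{eq(d14)} by $y/(y-1)$, not by $y$. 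In fact, for this particular pair $l(h^{-1}(y))\,l(y)=(1-y)^{\lambda+1}(1-y)^{-\lambda-1}\equiv 1$, so \eqref{eq(d14)} collapses to $e^{xy/(y-1)}$ and ${_L}A_{n}(x)=n!\,L_{n}^{(-1)}(x)$, independently of $\lambda$. Concretely, ${_L}A_{1}(x)=-x$ and ${_L}A_{2}(x)=x^{2}-2x$, so ${_L}A_{2}'(x)=2x-2\neq 2\,{_L}A_{1}(x)$. The passage from the differential equation \eqref{eq(d24)} to the convolution form \eqref{eq(d32)} is therefore invalid.

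There is a second gap: you insert the closed forms $a_{k}=(1)_{k}$, $b_{k}=(\lambda+1)((2)_{k}-(1)_{k})$, $c_{k}=(\lambda+1)[(3)_{k}-(4)_{k}]$ on faith, checking only the $k=0$ discrepancy. Computing them from the definitions in Theorem~\ref{thm2.1} with $h(y)=y/(y-1)$ and $l(y)=(1-y)^{-\lambda-1}$ gives $h(y)/h'(y)=y-y^{2}$, $-h(y)l'(h(y))/l(h(y))=(\lambda+1)y$ and $-h(y)l'(y)/\big(h'(y)l(y)\big)=-(\lambda+1)y$, whence $a_{1}=1$, $a_{2}=-2$, all other $a_{k}=0$, and $b_{k}+c_{k}=0$ for every $k$; the claimed closed forms are wrong for all $k\geqq 2$ (e.g.\ $a_{2}=-2$ while $(1)_{2}=2$), not only for $k=0$. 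The correct specialization of \eqref{eq(d24)} is $x\left({_L}A_{n}'(x)-{_L}A_{n}''(x)\right)=n\,{_L}A_{n}(x)$, the classical Laguerre equation of index $-1$, and the identity \eqref{eq(d32)} you are trying to prove is in fact false: for $n=1$ its left-hand side equals $x\,{_L}A_{0}(x)=x$, while its right-hand side equals ${_L}A_{1}(x)=-x$. So, although you have faithfully reconstructed the paper's intended argument, no completion of it can succeed; the defect lies in the corollary and the paper's asserted coefficients, not merely in your write-up.
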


\begin{example} \label{ex2.1}
{\rm By applying Theorem \ref{thm2.1} to the Miller-Lee type Appell polynomials
$G_{n}^{(m)}(x)$ given by
$$G_{n}^{(m)}(x)\thicksim\left((1-y)^{m+1}, y\right),$$
we have
\begin{eqnarray*}
a_{k}=\left\{
\begin{array}{ll}
1 & \qquad (k=1)\\
\\
0 & \qquad (k>1),
\end{array}
\right.
\end{eqnarray*}
\begin{eqnarray*}
b_{k}=\left\{
\begin{array}{ll}
0 &\qquad  (k=0)\\
\\
(m+1)(1)_k & \qquad (k>0)
\end{array}
\right.
\end{eqnarray*}
and
\begin{eqnarray*}
c_{k}=\left\{
\begin{array}{ll}
0 & \qquad (k=0)\\
\\
-(m+1)(1)_k & \qquad (k>0).
\end{array}
\right.
\end{eqnarray*}
Hence we get the following recurrence relation for the Miller-Lee type
Appell polynomials $G_{n}^{(m)}(x):$}
\begin{eqnarray}\label{eq(d49a)}
n{_G}A_{n}(x)-nx{_G}A_{n-1}(x)=\sum_{k=1}^{n}\binom{n}{k}{_G}A_{n-k}(x)(b_{k}+c_{k}).
\end{eqnarray}
\end{example}

\section{Recurrence relations for the Sheffer-Appell polynomials}

Here, in this section, we first state and prove Theorem \ref{thm3.1} below.

\begin{theorem} \label{thm3.1}
Let ${_s}A_{n}(x) \thicksim(l(y),h(y))$ be the Sheffer-Appell polynomial sequence.
Then the following recursive formula holds true for ${_s}A_{n}(x):$
\begin{eqnarray}\label{eq(d33)}
{_s}A_{n+1}(x)=\sum_{k=0}^{n}(xa_k+b_k+c_k)\;\frac{{_s}A_{n}^{(k)}(x)}{k!},
\end{eqnarray}
where
$$a_k=\left(\frac{1}{h'(y)}\right)^{(k)}\Bigg|_{y=0}\qquad (k \geqq 0),$$
$$b_k=\left(-\frac{l'(h(y))}{l(h(y))}\right)^{(k)}\Bigg|_{y=0}\qquad (k \geqq 0)$$
and
$$c_k=\left(-\frac{l'(y)}{h'(y)l(y)}\right)^{(k)}\Bigg|_{y=0}\qquad (k \geqq 0).$$
\end{theorem}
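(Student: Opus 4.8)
The plan is to mirror the proof of Theorem \ref{thm2.1}, but to start from the plain derivative $\frac{d}{dy}$ of the generating function in place of the operator $y\frac{d}{dy}$. Writing $G(x,y)=\frac{e^{xh^{-1}(y)}}{l(h^{-1}(y))l(y)}$ for the generating function in \eqref{eq(d14)}, I would first record that, since differentiation in $y$ shifts the index in the expansion $G(x,y)=\sum_{n\geqq 0}{_s}A_{n}(x)\,y^{n}/n!$, the Wronskian of $\frac{d}{dy}G$ evaluated at the origin is the shifted coefficient vector
\begin{align*}
W_{n}\left[\frac{d}{dy}G\right]\Bigg|_{y=0}=[{_s}A_{1}(x),\,{_s}A_{2}(x),\,\cdots,\,{_s}A_{n+1}(x)]^{T}.
\end{align*}
In particular its $n$th (bottom) entry is ${_s}A_{n+1}(x)$, which will play the role of the left-hand side of \eqref{eq(d33)}.

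For the second evaluation of the same Wronskian, I would compute the logarithmic derivative $\frac{d}{dy}G=\left(\frac{d}{dy}\log G\right)G$. Using $(h^{-1})'(y)=1/h'(h^{-1}(y))$ this gives
\begin{align*}
\frac{d}{dy}G=\left(\frac{x}{h'(h^{-1}(y))}-\frac{l'(h^{-1}(y))}{l(h^{-1}(y))\,h'(h^{-1}(y))}-\frac{l'(y)}{l(y)}\right)G,
\end{align*}
and, reading the last factor $l(y)$ as $l(h(h^{-1}(y)))$ via $h(h^{-1}(y))=y$, every coefficient here is a function of $h^{-1}(y)$ alone. The crucial difference from Theorem \ref{thm2.1} is that, with no extra factor of $y=h(h^{-1}(y))$, the symbol loses the $h(y)$ present there, so after the substitution $y\mapsto h(y)$ the three coefficient functions become exactly $1/h'(y)$, $-l'(h(y))/l(h(y))$ and $-l'(y)/(h'(y)l(y))$, whose $k$th derivatives at the origin are the stated $a_{k}$, $b_{k}$ and $c_{k}$.

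Next I would apply Property IV to the last display, exactly as in the passage from \eqref{eq(d19)} to \eqref{eq(d20)}, to pull out the factor $W_{n}[1,(h^{-1}(y)),\cdots,(h^{-1}(y))^{n}]|_{y=0}\,\Omega_{n}^{-1}$ and replace the argument of the remaining Wronskian by its $y$-version
\begin{align*}
W_{n}\left[\left(\frac{x}{h'(y)}-\frac{l'(h(y))}{l(h(y))}-\frac{l'(y)}{h'(y)l(y)}\right)\frac{e^{xy}}{l(y)l(h(y))}\right]\Bigg|_{y=0}.
\end{align*}
Then Property III (with Property II to commute the Pascal factors) splits off $P_{n}[e^{xy}]\,P_{n}[1/l(y)]\,P_{n}[1/l(h(y))]$, leaving $W_{n}$ of the bracketed symbol, and the Lemma collapses the product $W_{n}[1,(h^{-1}(y)),\cdots,(h^{-1}(y))^{n}]|_{0}\,\Omega_{n}^{-1}P_{n}[1/l(y)]|_{0}\,P_{n}[1/l(h(y))]|_{0}\,P_{n}[e^{xy}]|_{0}$ into the lower-triangular matrix $W_{n}[{_s}A_{0}(x),\cdots,{_s}A_{n}(x)]^{T}\Omega_{n}^{-1}$ with entries ${_s}A_{i}^{(j)}(x)/j!$. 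The net result is
\begin{align*}
W_{n}\left[\frac{d}{dy}G\right]\Bigg|_{y=0}=\Big[W_{n}[{_s}A_{0}(x),\cdots,{_s}A_{n}(x)]^{T}\Omega_{n}^{-1}\Big]\,[xa_{0}+b_{0}+c_{0},\,\cdots,\,xa_{n}+b_{n}+c_{n}]^{T}.
\end{align*}
Equating the $n$th rows of the two evaluations, namely ${_s}A_{n+1}(x)$ on one side and $\sum_{k=0}^{n}(xa_{k}+b_{k}+c_{k})\,{_s}A_{n}^{(k)}(x)/k!$ on the other, yields \eqref{eq(d33)}.

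The only genuinely delicate step is the algebraic rewriting in the second paragraph: one must check both that the three coefficient functions are expressible through $h^{-1}(y)$ alone (which is what legitimizes the single application of Property IV) and that the $y\mapsto h(y)$ substitution produces precisely the $a_{k},b_{k},c_{k}$ as claimed, with the $h(y)$ factors of Theorem \ref{thm2.1} now absent. Everything after that is the same chain of Property III, Property IV and the Lemma already used for Theorem \ref{thm2.1}, so I expect no further obstacle.
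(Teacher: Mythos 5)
Your proposal is correct and follows essentially the same route as the paper's own proof: both start from $W_{n}\bigl[\tfrac{d}{dy}G\bigr]\big|_{y=0}$, identify it with the shifted vector $[{_s}A_{1}(x),\cdots,{_s}A_{n+1}(x)]^{T}$, rewrite the derivative via the logarithmic-derivative factorization, and then apply Property IV, Property III and the Lemma to obtain the matrix product whose $n$th row gives \eqref{eq(d33)}. Your explicit check that the coefficient functions depend on $h^{-1}(y)$ alone (and that the $y\mapsto h(y)$ substitution yields exactly the stated $a_{k},b_{k},c_{k}$) is a point the paper leaves implicit, but it is the same argument.
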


\begin{proof}
Let us consider
\begin{eqnarray}\label{eq(d34)}
W_{n}\left[\frac{d}{dy}\left(\frac{e^{xh^{-1}(y)}}
{l(h^{-1}(y))l(y)}\right)\right]\Bigg|_{y=0},
\end{eqnarray}
which, on the one hand, can be written as follows:
\begin{eqnarray}\label{eq(d35)}
W_{n}\left[\frac{d}{dy}\left(\frac{e^{xh^{-1}(y)}}
{l(h^{-1}(y))l(y)}\right)\right]\Bigg|_{y=0}
=[{_s}A_{1}(x), {_s}A_{2}(x),\cdots,{_s}A_{n+1}(x)]^T.
\end{eqnarray}
Also, on the other hand, we can write the equation \eqref{eq(d34)} in
the following form:
\begin{eqnarray}\label{eq(d36)}
W_{n}\left[\left(x\frac{1}{h'(h^{-1}(y))}-\frac{l'(y)}{l(y)}
-\frac{1}{h'(h^{-1}(y))}\frac{l'(h^{-1}(y))}{l(h^{-1}(y))}\right)
\frac{e^{xh^{-1}(y)}}{l(h^{-1}(y))l(y)}\right]\Bigg|_{y=0},
\end{eqnarray}
which, on using Property IV, yields
\begin{align}\label{eq(d37)}
&W_{n}\left[\left(x\frac{1}{h'(h^{-1}(y))}-\frac{l'(y)}{l(y)}
-\frac{1}{h'(h^{-1}(y))}\frac{l'(h^{-1}(y))}{l(h^{-1}(y))}\right)
\frac{e^{xh^{-1}(y)}}{l(h^{-1}(y))l(y)}\right]\Bigg|_{y=0}\notag \\
&\qquad =W_{n}[1,(h^{-1}(y)),(h^{-1}(y))^{2},\cdots,(h^{-1}(y))^{n}]
\Bigg{|}_{y=0}\; \Omega_{n}^{-1}\nonumber\\
&\qquad \qquad \cdot W_{n}\left[ \left(x\frac{1}{h'(y)}
-\frac{l'(h(y))}{l(h(y))}-\frac{1}{h'(y)}\frac{l'(y)}{l(y)}\right)
\frac{e^{xy}}{l(y)l(h(y))}\right]\Bigg{|}_{y=0}.
\end{align}

Now, if we make use of Property III, we find from \eqref{eq(d37)}
that
\begin{align}\label{eq(d38)}
&W_{n}\left[\left(x\frac{1}{h'(h^{-1}(y))}-\frac{l'(y)}{l(y)}
-\frac{1}{h'(h^{-1}(y))}\frac{l'(h^{-1}(y))}{l(h^{-1}(y))}\right)
\frac{e^{xh^{-1}(y)}}{l(h^{-1}(y))l(y)}\right]\Bigg|_{y=0}\notag \\
&\qquad =W_{n}[1,(h^{-1}(y)),(h^{-1}(y))^{2},\cdots,(h^{-1}(y))^{n}]
\Bigg{|}_{y=0}\;
\Omega_{n}^{-1}\nonumber\\
&\qquad \qquad \cdot P_{n}\left[e^{xy}\right]\Bigg{|}_{y=0}\;
P_{n}\left[\frac{1}{l(y)}\right]\Bigg{|}_{y=0}\;
P_n\left[\frac{1}{l(h(y))}\right]\Bigg{|}_{y=0}\notag \\
&\qquad \qquad \cdot W_{n}\left[\left(x\frac{1}{h'(y)}
-\frac{l'(h(y))}{l(h(y))}-\frac{1}{h'(y)}\frac{l'(y)}{l(y)}\right)
\right]\Bigg{|}_{y=0}.
\end{align}

Finally, by applying the above Lemma, we get
\begin{align*}
&W_{n}\left[\left(x\frac{1}{h'(h^{-1}(y))}-\frac{l'(y)}{l(y)}
-\frac{1}{h'(h^{-1}(y))}\frac{l'(h^{-1}(y))}{l(h^{-1}(y))}\right)
\frac{e^{xh^{-1}(y)}}{l(h^{-1}(y))l(y)}\right]\Bigg|_{y=0}\notag \\
&\qquad =W_{n}[{_s}A_{0}(x), {_s}A_{1}(x),\cdots,{_s}A_{n}(x)]^T \Omega^{-1}
\notag \\
&\qquad \qquad \cdot W_{n}\left[ \left(x\frac{1}{h'(y)}-\frac{l'(h(y))}{l(h(y))}
-\frac{1}{h'(y)}\frac{l'(y)}{l(y)}\right)\right]\Bigg{|}_{y=0}
\end{align*}
or, equivalently,
\begin{align}\label{eq(d39)}
&W_{n}\left[\left(x\frac{1}{h'(h^{-1}(y))}-\frac{l'(y)}{l(y)}
-\frac{1}{h'(h^{-1}(y))}\frac{l'(h^{-1}(y))}{l(h^{-1}(y))}\right)
\frac{e^{xh^{-1}(y)}}{l(h^{-1}(y))l(y)}\right]\Bigg|_{y=0}\notag \\
&\qquad =
\begin{bmatrix}
{_s}A_{0}(x) & 0 & 0 & \cdots & 0 \\[3pt]
{_s}A_{1}(x) & \frac{{_s}A_{1}^{'}(x)}{1!} & 0 & \cdots & 0 \\[3pt]
{_s}A_{2}(x) & \frac{{_s}A_{2}^{'}(x)}{1!} & \frac{{_s}A_{2}^{''}(x)}{2!}
& \cdots & 0 \\[3pt]
\vdots & \vdots & \vdots & \ddots & \vdots \\[3pt]
{_s}A_{n}(x) & \frac{{_s}A_{n}^{'}(x)}{1!} & \frac{{_s}A_{n}^{''}(x)}{2!}
& \cdots & \frac{{_s}A_{n}^{(n)}(x)}{n!} \\
\end{bmatrix}
\begin{bmatrix}
xa_0+b_0+c_0  \\
xa_1+b_1+c_1  \\
xa_2+b_2+c_2  \\
\vdots  \\
xa_n+b_n+c_n  \\
\end{bmatrix}.
\end{align}
Equating the $n${th} rows of \eqref{eq(d35)} and \eqref{eq(d39)},
we arrive at the desired result \eqref{eq(d33)} asserted by Theorem \ref{thm3.1}.
\end{proof}

Corollary \ref{cor3.1} below follows from Theorem \ref{thm3.1} in its special case when
$l(y)=1$ and $b_{k}=c_{k}=0$ $(\forall\; k \geqq 1)$ in the recursive formula \eqref{eq(d33)}.

\begin{corollary}\label{cor3.1}
Let ${_q}A_{n}(x)\thicksim(1,h(y))$ be the associated polynomial
sequence. Then
\begin{eqnarray}\label{eq(d40)}
{_q}A_{n+1}(x)=x\sum_{k=0}^{n}a_k\;\frac{{_q}A_{n}^{(k)}(x)}{k!},
\end{eqnarray}
where
$$a_{k}=\left(\frac{1}{h'(y)}\right)^{(k)}\Bigg{|}_{y=0}\qquad (k \geqq 0).$$
\end{corollary}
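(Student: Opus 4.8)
The plan is to obtain Corollary~\ref{cor3.1} as the direct specialization of Theorem~\ref{thm3.1} to the case $l(y)=1$, so that no fresh machinery is needed. First I would observe that, upon setting $l(y)=1$, both factors $l(h^{-1}(y))$ and $l(y)$ in the generating function \eqref{eq(d14)} equal $1$, so that generating function collapses to $e^{xh^{-1}(y)}$. This is precisely the generating function of the associated sequence ${_q}A_{n}(x)\thicksim(1,h(y))$; hence the sequence ${_s}A_{n}(x)$ occurring in Theorem~\ref{thm3.1} becomes ${_q}A_{n}(x)$ under this choice.

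Next I would evaluate the coefficients $a_k$, $b_k$, $c_k$ of Theorem~\ref{thm3.1} at $l(y)=1$. Since $l(y)\equiv 1$ is constant we have $l'(y)\equiv 0$, and therefore also $l'(h(y))\equiv 0$. Substituting these into the defining formulas gives
\[
b_k=\left(-\frac{l'(h(y))}{l(h(y))}\right)^{(k)}\Bigg|_{y=0}=0
\qquad\text{and}\qquad
c_k=\left(-\frac{l'(y)}{h'(y)l(y)}\right)^{(k)}\Bigg|_{y=0}=0
\]
for every $k\geqq 0$, which is exactly the vanishing condition $b_k=c_k=0$ used to describe the specialization. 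Note that this already holds at $k=0$, so no stray terms survive. The coefficient $a_k=\left(1/h'(y)\right)^{(k)}\big|_{y=0}$ is unchanged, as its definition involves $h$ alone.

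Finally I would substitute $b_k=c_k=0$ into the recurrence \eqref{eq(d33)}. Each summand $(xa_k+b_k+c_k)\,{_q}A_{n}^{(k)}(x)/k!$ reduces to $xa_k\,{_q}A_{n}^{(k)}(x)/k!$, and the left-hand side ${_s}A_{n+1}(x)$ becomes ${_q}A_{n+1}(x)$, which yields
\[
{_q}A_{n+1}(x)=x\sum_{k=0}^{n}a_k\,\frac{{_q}A_{n}^{(k)}(x)}{k!},
\]
namely \eqref{eq(d40)}. I do not expect any genuine obstacle: the whole argument is a transparent specialization, and the only step deserving a moment's care is verifying that $l(y)=1$ forces $b_k$ and $c_k$ to vanish for all $k\geqq 0$ (and not merely for $k\geqq 1$), which the computation above settles immediately.
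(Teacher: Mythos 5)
Your proposal is correct and is essentially the paper's own argument: the paper derives Corollary~\ref{cor3.1} precisely by specializing Theorem~\ref{thm3.1} to $l(y)=1$, which kills the $b_k$ and $c_k$ terms and reduces the Sheffer--Appell sequence to the associated sequence ${_q}A_n(x)$. If anything, your write-up is slightly more careful than the paper's phrasing, since you verify that $b_0=c_0=0$ as well (the paper only states the vanishing for $k\geqq 1$), which is exactly what is needed for the clean form of \eqref{eq(d40)}.
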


\begin{example} \label{ex3.1}
{\rm Here, in this example, we apply Theorem \ref{thm3.1} to the
generalized Laguerre polynomials $L_{n}^{(\lambda)}(x)$ given by
$$L_{n}^{\lambda}(x)\thicksim\left((1-y)^{-\lambda-1},
\frac{y}{y-1}\right)$$
with
\begin{eqnarray*}
a_{k}=\left\{
\begin{array}{ll}
-1 & \qquad (k=0)\\
\\
2 & \qquad (k=1)\\
\\
-2 &\qquad (k=2)\\
\\
0 & \qquad (k>2),
\end{array}
\right.
\end{eqnarray*}
\begin{eqnarray*}
b_{k}=-(\lambda+1)(1)_{k}
\end{eqnarray*}
and
\begin{eqnarray*}
c_{k}=\left\{
\begin{array}{ll}
-\lambda-1 & \qquad (k=0)\\
\\
\lambda+1 & \qquad (k=1)\\
\\
0 & \qquad (k>1).
\end{array}
\right.
\end{eqnarray*}
Hence we get the following recurrence relation
for the Laguerre-Appell polynomials:
\begin{align}\label{eq(d41)}
{_L}A_{n+1}(x)+(x+2\lambda+2){_L}A_{n}(x)
&=2xn{_L}A_{n-1}(x)-2(x+\lambda+1)\binom{n}{2}{_L}A_{n-2}(x)
\nonumber\\
&\qquad +(\lambda+1)\sum_{k=3}^{n}\binom{n}{k}{_L}A_{n-k}(x)k!.
\end{align}

If we apply Theorem \ref{thm3.1} to the Miller-Lee type Appell
polynomials $G_{n}^{(m)}(x)$ given by
$$G_{n}^{(m)}(x)\thicksim\left((1-y)^{m+1},y\right),$$
we have
\begin{eqnarray*}
a_{k}=\left\{
\begin{array}{ll}
1 & \qquad (k=0)\\
\\
0 & \qquad (k>0),
\end{array}
\right.
\end{eqnarray*}
\begin{eqnarray*}
b_{k}=(m+1)(1)_k
\end{eqnarray*}
and
\begin{eqnarray*}
c_{k}=-(m+1)(1)_k.
\end{eqnarray*}
Hence we get the following recurrence relation
for the Miller-Lee type Appell polynomials:}
\begin{eqnarray}\label{eq(d49b)}
{_G}A_{n+1}(x)-x{_G}A_{n}(x)=\sum_{k=0}^{n}
\binom{n}{k}{_G}A_{n-k}(x)(b_{k}+c_{k}).
\end{eqnarray}
\end{example}

\begin{theorem}\label{thm3.2}
Let $\{{_s}A_{n}(x)\} \thicksim(l(y),h(y))$ be the
Sheffer-Appell polynomial sequence.
Then the following recursive formula holds true for ${_s}A_{n}(x):$
\begin{eqnarray}\label{eq(d42)}
{_s}A_{n+1}(x)a_{0}=x {_s}A_{n}(x)
+\sum_{k=0}^{n}\binom{n}{k}{_s}A_{n-k}(x)(b_{k}+c_{k})
-\sum_{k=1}^{n}\binom{n}{k}{_s}A_{n+1-k}(x)a_{k},
\end{eqnarray}
where
$$a_k=\left({h'(h^{-1}(y))}\right)^{(k)}\Bigg{|}_{y=0},$$
$$b_k=\left(-\frac{h'(h^{-1}(y))l'(y)}{l(y)}\right)^{(k)}\Bigg{|}_{y=0}$$
and
$$c_k=\left(-\frac{l'(h^{-1}(y))}{l(h^{-1}(y))}\right)^{(k)}\Bigg{|}_{y=0}.$$
\end{theorem}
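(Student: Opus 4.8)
The plan is to follow the template of the proofs of Theorems \ref{thm2.1} and \ref{thm3.1}, evaluating a single Wronskian in two ways and matching the $n$th rows. The key simplification is that I would premultiply the $y$-derivative of the generating function by $h'(h^{-1}(y))$; because the right-hand side of \eqref{eq(d42)} involves only index-shifted values ${_s}A_{m}(x)$ and no $x$-derivatives, I expect that neither Property IV nor the Lemma will be needed, and the coefficients $a_{k},b_{k},c_{k}$ will come out evaluated at $y=0$ directly in the $y$-variable rather than after composition with $h$, as in the earlier results. Writing $\Phi(y)$ for the generating function on the left-hand side of \eqref{eq(d14)}, the object to analyse is $W_{n}\big[h'(h^{-1}(y))\,\Phi'(y)\big]\big|_{y=0}$.

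For the first evaluation I would apply Property III to factor $W_{n}[h'(h^{-1}(y))\,\Phi'(y)]=P_{n}[h'(h^{-1}(y))]\,W_{n}[\Phi'(y)]$. Since $\Phi(y)=\sum_{m\ge 0}{_s}A_{m}(x)\,y^{m}/m!$, differentiation shifts the index, so, exactly as in \eqref{eq(d35)}, $W_{n}[\Phi'(y)]\big|_{y=0}=[{_s}A_{1}(x),\dots,{_s}A_{n+1}(x)]^{T}$. As $P_{n}[h'(h^{-1}(y))]\big|_{y=0}$ is lower triangular with $(i,j)$ entry $\binom{i}{j}a_{i-j}$, pairing its last row with this vector gives the $n$th entry $\sum_{k=0}^{n}\binom{n}{k}a_{k}\,{_s}A_{n+1-k}(x)$, which supplies both $a_{0}\,{_s}A_{n+1}(x)$ and the final sum $\sum_{k=1}^{n}\binom{n}{k}a_{k}\,{_s}A_{n+1-k}(x)$ of the claim.

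The main obstacle is the second evaluation, which rests on the first-order relation $h'(h^{-1}(y))\,\Phi'(y)=(x+b(y)+c(y))\,\Phi(y)$ with $b(y)=-\tfrac{h'(h^{-1}(y))l'(y)}{l(y)}$ and $c(y)=-\tfrac{l'(h^{-1}(y))}{l(h^{-1}(y))}$. I would derive this by logarithmic differentiation: from $\log\Phi(y)=x\,h^{-1}(y)-\log l(h^{-1}(y))-\log l(y)$ together with $(h^{-1})'(y)=1/h'(h^{-1}(y))$ one finds $\Phi'(y)/\Phi(y)=\frac{x}{h'(h^{-1}(y))}-\frac{l'(h^{-1}(y))}{l(h^{-1}(y))h'(h^{-1}(y))}-\frac{l'(y)}{l(y)}$, and multiplying through by $h'(h^{-1}(y))$ clears the denominator and produces precisely $a(y)=h'(h^{-1}(y))$, $b(y)$ and $c(y)$. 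This chain-rule bookkeeping with the compositional inverse is the only delicate point; once it is in place everything is formal.

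Finally I would invoke the linearity of $W_{n}$ (Property I) to write $W_{n}[(x+b+c)\Phi]=x\,W_{n}[\Phi]+W_{n}[(b+c)\Phi]$, and then Property III to get $W_{n}[(b+c)\Phi]=P_{n}[b+c]\,W_{n}[\Phi]$, all at $y=0$. Using $W_{n}[\Phi]\big|_{y=0}=[{_s}A_{0}(x),\dots,{_s}A_{n}(x)]^{T}$ from \eqref{eq(d17)}, the $n$th entry of this second expression is $x\,{_s}A_{n}(x)+\sum_{k=0}^{n}\binom{n}{k}(b_{k}+c_{k})\,{_s}A_{n-k}(x)$. Equating the $n$th rows of the two evaluations of $W_{n}[h'(h^{-1}(y))\,\Phi'(y)]\big|_{y=0}$ and solving for $a_{0}\,{_s}A_{n+1}(x)$ then gives \eqref{eq(d42)}.
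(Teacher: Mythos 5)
Your proposal is correct and is essentially the paper's own argument: both start from $W_{n}\left[h'(h^{-1}(y))\,\frac{d}{dy}\left(\frac{e^{xh^{-1}(y)}}{l(h^{-1}(y))l(y)}\right)\right]\Big|_{y=0}$, evaluate it once through the index-shifted coefficient vector and once through the first-order relation \eqref{eq(d45)} (which you justify explicitly by logarithmic differentiation, a step the paper merely asserts), and then equate $n$th rows. The only difference is cosmetic: you put $h'(h^{-1}(y))$ and $b(y)+c(y)$ inside the Pascal factor and the generating function inside the Wronskian factor, while the paper in \eqref{eq(d44)} and \eqref{eq(d47)} does the opposite; the two orderings coincide by Property III itself, so the resulting $n$th-row identities are identical.
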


\begin{proof}
Let us begin with
\begin{eqnarray}\label{eq(d43)}
W_{n}\left[h'(h^{-1}(y))\frac{d}{dy}\left(\frac{e^{xh^{-1}(y)}}
{l(h^{-1}(y))l(y)}\right)\right]\Bigg{|}_{y=0},
\end{eqnarray}
which, by applying Property III, yields
\begin{align*}
&W_{n}\left[h'(h^{-1}(y))\frac{d}{dy}\left(\frac{e^{xh^{-1}(y)}}
{l(h^{-1}(y))l(y)}\right)\right]\Bigg{|}_{y=0}\notag \\
&\qquad =P_{n}\left[\frac{d}{dy}\left(\frac{e^{xh^{-1}(y)}}
{l(h^{-1}(y))l(y)}\right)\right]\Bigg{|}_{y=0}W_{n}
\left[h'(h^{-1}(y))\right]\Bigg{|}_{y=0}
\end{align*}
or, equivalently,
\begin{align}\label{eq(d44)}
&W_{n}\left[h'(h^{-1}(y))\frac{d}{dy}\left(\frac{e^{xh^{-1}(y)}}
{l(h^{-1}(y))l(y)}\right)\right]\Bigg{|}_{y=0}\notag \\
&\qquad =
\begin{bmatrix}
{_s}A_{1}(x) & 0 & 0 & \cdots & 0 \\[3pt]
{_s}A_{2}(x) & {_s}A_{1}(x) & 0 & \cdots & 0 \\[3pt]
{_s}A_{3}(x) & \binom{2}{1}{_s}A_{2}(x) & {_s}A_{1}(x) & \cdots & 0 \\[3pt]
\vdots & \vdots & \vdots & \ddots & \vdots \\[3pt]
{_s}A_{n+1}(x) & \binom{n}{1}{_s}A_{n}(x) & \binom{n}{2}{_s}A_{n-1}(x)
& \cdots & {_s}A_{1}(x) \\
\end{bmatrix}
\begin{bmatrix}
a_0  \\
a_1  \\
a_2  \\
\vdots  \\
a_n  \\
\end{bmatrix}.
\end{align}
On the other hand, we can write the equation \eqref{eq(d43)} as follows:
\begin{align}\label{eq(d45)}
&W_{n}\left[h'(h^{-1}(y))\frac{d}{dy}\left(\frac{e^{xh^{-1}(y)}}
{l(h^{-1}(y))l(y)}\right)\right]\Bigg{|}_{y=0}\notag \\
&\qquad = W_{n}\left[\left(x-\frac{h'(h^{-1}(y))l'(y)}{l(y)}
-\frac{l'(h^{-1}(y))}{l(h^{-1}(y))}\right)\frac{e^{xh^{-1}(y)}}
{l(h^{-1}(y))l(y)}\right]\Bigg{|}_{y=0}.
\end{align}

Now, if we apply Property I, we get
\begin{align}\label{eq(d46)}
&W_{n}\left[h'(h^{-1}(y))\frac{d}{dy}\left(\frac{e^{xh^{-1}(y)}}
{l(h^{-1}(y))l(y)}\right)\right]\Bigg{|}_{y=0}\notag \\
&\qquad =x W_{n} \left[\frac{e^{xh^{-1}(y)}}{l(h^{-1}(y))l(y)}
\right]\Bigg{|}_{y=0}
-W_{n}\left[\frac{h'(h^{-1}(y))l'(y)}{l(y)}\frac{e^{xh^{-1}(y)}}
{l(h^{-1}(y))l(y)}\right]\Bigg{|}_{y=0}\nonumber\\
&\qquad -W_{n}\left[\frac{l'(h^{-1}(y))}{l(h^{-1}(y))}\;
\frac{e^{xh^{-1}(y)}}
{l(h^{-1}(y))l(y)}\right]\Bigg{|}_{y=0},
\end{align}
which, by using Property III, yields
\begin{align*}
&W_{n}\left[h'(h^{-1}(y))\frac{d}{dy}\left(\frac{e^{xh^{-1}(y)}}
{l(h^{-1}(y))l(y)}\right)\right]\Bigg{|}_{y=0}\notag \\
&\qquad =x W_{n} \left[\frac{e^{xh^{-1}(y)}}
{l(h^{-1}(y))l(y)}\right]
\Bigg{|}_{y=0}+P_{n}\left[\frac{e^{xh^{-1}(y)}}
{l(h^{-1}(y))l(y)}\right]
\Bigg{|}_{y=0}W_{n}\left[\frac{-h'(h^{-1}(y))l'(y)}{l(y)}
\right]\Bigg{|}_{y=0}
\nonumber\\
&\qquad +P_{n}\left[\frac{e^{xh^{-1}(y)}}{l(h^{-1}(y))l(y)}
\right]\Bigg{|}_{y=0}\;
W_{n}\left[\frac{-l'(h^{-1}(y))}{l(h^{-1}(y))}\right]\Bigg{|}_{y=0}
\end{align*}
or, equivalently,
\begin{align}\label{eq(d47)}
&W_{n}\left[h'(h^{-1}(y))\frac{d}{dy}\left(\frac{e^{xh^{-1}(y)}}
{l(h^{-1}(y))l(y)}\right)\right]\Bigg{|}_{y=0}\notag \\
&\qquad =x
\begin{bmatrix}
{_s}A_{0}(x)  \\
{_s}A_{1}(x)  \\
{_s}A_{2}(x)  \\
\vdots  \\
{_s}A_{n-1}(x)  \\
{_s}A_{n}(x)  \\
\end{bmatrix}
+
\begin{bmatrix}
{_s}A_{0}(x) & 0 & 0 & \cdots & 0 \\[3pt]
{_s}A_{1}(x) & {_s}A_{0}(x) & 0 & \cdots & 0 \\[3pt]
{_s}A_{2}(x) & \binom{2}{1}{_s}A_{1}(x) & {_s}A_{0}(x) & \cdots & 0 \\[3pt]
\vdots & \vdots & \vdots & \ddots & \vdots \\[3pt]
{_s}A_{n}(x) & \binom{n}{1}{_s}A_{n-1}(x) & \cdots & {_s}A_{0}(x) \\
\end{bmatrix}
\begin{bmatrix}
b_0  \\
b_1  \\
b_2  \\
\vdots  \\
b_n  \\
\end{bmatrix}
\nonumber\\
&\qquad \qquad +
\begin{bmatrix}
{_s}A_{0}(x) & 0 & 0 & \cdots & 0 \\[3pt]
{_s}A_{1}(x) & {_s}A_{0}(x) & 0 & \cdots & 0 \\[3pt]
{_s}A_{2}(x) & \binom{2}{1}{_s}A_{1}(x) & {_s}A_{0}(x) & \cdots & 0 \\[3pt]
\vdots & \vdots & \vdots & \ddots & \vdots \\[3pt]
{_s}A_{n}(x) & \binom{n}{1}{_s}A_{n-1}(x) & \cdots & {_s}A_{0}(x) \\
\end{bmatrix}
\begin{bmatrix}
c_0  \\
c_1  \\
c_2  \\
\vdots  \\
c_n  \\
\end{bmatrix}.
\end{align}
Equating $n${th} rows of \eqref{eq(d44)} and \eqref{eq(d47)}, we arrive at
the desired result \eqref{eq(d42)} asserted by Theorem \ref{thm3.2}.
\end{proof}

Upon setting $l(y)=1$ and $b_{k}=c_{k}=0$ $(\forall \;k\geqq 1)$ in
the recursive formula \eqref{eq(d42)} asserted by Theorem \ref{thm3.1},
we can deduce the following corollary.

\begin{corollary}\label{cor3.2}
Let ${_q}A_{n}(x)\thicksim(1,h(y))$ be the associated polynomial sequence.
Then
\begin{eqnarray}\label{eq(d48)}
{_q}A_{n+1}(x)=x\sum_{k=0}^{n}a_k\; \frac{{_q}A_{n}^{(k)}(x)}{k!},
\end{eqnarray}
where
$$a_{k}=\left(\frac{1}{h'(y)}\right)^{(k)}\Bigg{|}_{y=0}.$$
\end{corollary}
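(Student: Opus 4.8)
The plan is to derive Corollary \ref{cor3.2} as an immediate specialization of Theorem \ref{thm3.1}, avoiding any fresh application of the Wronskian and Pascal functional machinery. The key observation is that the associated sequence ${_q}A_n(x)\thicksim(1,h(y))$ is nothing but the Sheffer-Appell sequence ${_s}A_n(x)\thicksim(l(y),h(y))$ evaluated at $l(y)\equiv 1$: in that case the generating function \eqref{eq(d14)} collapses to $e^{xh^{-1}(y)}$, so the recursive formula \eqref{eq(d33)} of Theorem \ref{thm3.1} holds verbatim with each ${_s}A_m$ replaced by ${_q}A_m$.

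First I would examine how the substitution $l(y)=1$ affects the three coefficient families of Theorem \ref{thm3.1}. Since $l\equiv 1$ gives $l'\equiv 0$, the composite $l'(h(y))$ vanishes identically as well, so both
$$b_k=\left(-\frac{l'(h(y))}{l(h(y))}\right)^{(k)}\Bigg|_{y=0}\qquad\text{and}\qquad c_k=\left(-\frac{l'(y)}{h'(y)l(y)}\right)^{(k)}\Bigg|_{y=0}$$
are zero for every $k\geqq 0$, the constant terms $b_0,c_0$ included, because each defining expression carries the vanishing factor $l'$. The remaining coefficient $a_k=\bigl(1/h'(y)\bigr)^{(k)}\big|_{y=0}$ does not involve $l$ and hence is untouched by the specialization, already coinciding with the $a_k$ stated in the corollary.

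It then remains only to insert these values into \eqref{eq(d33)}. With $b_k=c_k=0$ the sum $\sum_{k=0}^{n}(xa_k+b_k+c_k)\,{_q}A_n^{(k)}(x)/k!$ reduces to $x\sum_{k=0}^{n}a_k\,{_q}A_n^{(k)}(x)/k!$, giving precisely \eqref{eq(d48)}.

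I do not expect a genuine obstacle: the whole argument reduces to the vanishing of $b_k$ and $c_k$, which is transparent once one notes that every term of their defining quotients contains a factor $l'$. The single point deserving a sentence of justification is the identification of the $(1,h(y))$ sequence with the $l\equiv 1$ degeneration of the Sheffer-Appell sequence, since this is what permits carrying the identity \eqref{eq(d33)} over unchanged; but that identification is exactly the convention already encoded in \eqref{eq(d14)}. (Note also that this renders Corollary \ref{cor3.2} identical in content to Corollary \ref{cor3.1}, each being the $l\equiv 1$ reduction of its parent result.)
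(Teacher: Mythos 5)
Your proof is correct: with $l(y)\equiv 1$ one has $l'\equiv 0$, so every $b_k$ and $c_k$ of Theorem \ref{thm3.1} vanishes (for $k=0$ as well, which you rightly note is stronger than the paper's ``$\forall\,k\geqq 1$''), the generating function \eqref{eq(d14)} collapses to $e^{xh^{-1}(y)}$, and \eqref{eq(d33)} becomes \eqref{eq(d48)}. Where you differ from the paper is in the choice of parent result, and your choice is the one that actually works. The paper introduces Corollary \ref{cor3.2} by claiming it follows from ``the recursive formula \eqref{eq(d42)} asserted by Theorem \ref{thm3.1}'' --- an internally inconsistent citation, since \eqref{eq(d42)} belongs to Theorem \ref{thm3.2}. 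Taken literally, that route fails: setting $l(y)\equiv 1$ in \eqref{eq(d42)} yields
\begin{equation*}
{_q}A_{n+1}(x)\,a_{0}=x\,{_q}A_{n}(x)-\sum_{k=1}^{n}\binom{n}{k}\,{_q}A_{n+1-k}(x)\,a_{k},
\qquad a_k=\bigl(h'(h^{-1}(y))\bigr)^{(k)}\Big|_{y=0},
\end{equation*}
a recurrence in the polynomials themselves with no $x$-derivatives, which is not \eqref{eq(d48)}. The formula actually stated in the corollary --- with the derivatives ${_q}A_{n}^{(k)}(x)/k!$ and $a_k=\bigl(1/h'(y)\bigr)^{(k)}\big|_{y=0}$ --- follows only from Theorem \ref{thm3.1}, exactly as you argue; this is also why, as you observe, Corollary \ref{cor3.2} duplicates Corollary \ref{cor3.1} verbatim. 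So your specialization is the correct reading of what the paper intended and silently repairs its mis-citation; had you instead specialized Theorem \ref{thm3.2}, as the paper's placement and equation reference suggest, you would have obtained a genuinely different (derivative-free) recurrence rather than the stated one.
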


\begin{example}\label{ex3.2}
{\rm Applying Theorem \ref{thm3.2} to the Miller-Lee type Appell polynomials
$G_{n}^{(m)}(x)$ given by
$$G_{n}^{(m)}(x)\thicksim\left((1-y)^{m+1}, y\right),$$ we have
\begin{eqnarray*}
a_{k}=\left\{
\begin{array}{ll}
1 & \qquad (k=0)\\
\\
0 & \qquad (k>0)
\end{array}
\right.
\end{eqnarray*}
and
\begin{eqnarray*}
b_{k}=c_k=-(\lambda+1)(1)_{k}.
\end{eqnarray*}
Hence we get the following recurrence relation for Miller-Lee type Appell polynomials:}
\begin{eqnarray}\label{eq(d49c)}
{_G}A_{n+1}(x)=x{_G}A_{n}(x)-2(\lambda+1)\sum_{k=0}^{n}\binom{n}{k}{_G}A_{n-k}(x)k!.
\end{eqnarray}
\end{example}

\begin{theorem}\label{thm3.3}
Let ${_s}A_{n}(x) \thicksim(l(y),h(y))$ be the Sheffer-Appell polynomial sequence.
Then the following recursive formula holds true for ${_s}A_{n}(x):$
\begin{eqnarray}\label{eq(d50)}
{_s}A_{n+1}(x)=\sum_{k=0}^{n}\binom{n}{k}(xa_k+b_k+c_k){_s}A_{n-k}(x),
\end{eqnarray}
where
$$a_k=\left(\frac{1}{h'(h^{-1}(y))}\right)^{(k)}\Bigg{|}_{y=0},$$
$$b_k=\left(-\frac{l'(y)}{l(y)}\right)^{(k)}\Bigg{|}_{y=0}$$
and
$$c_k=\left(-\frac{l'(h^{-1}(y))}{h'(h^{-1}(y))l(h^{-1}(y))}\right)^{(k)}
\Bigg{|}_{y=0}.$$
\end{theorem}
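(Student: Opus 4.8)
The plan is to mirror the proof of Theorem~\ref{thm3.2}, but to apply Property~III with the generating function itself as the Pascal factor; this is precisely what will convert the result into a binomial convolution with shifted indices rather than the derivative-in-$x$ formula of Theorem~\ref{thm3.1}. Writing $f(y)=\frac{1}{l(h^{-1}(y))l(y)}\,e^{xh^{-1}(y)}$, I would begin from the single object
\begin{eqnarray*}
W_{n}\left[\frac{d}{dy}\,f(y)\right]\Bigg|_{y=0}
\end{eqnarray*}
and evaluate it in two ways. On the one hand, differentiating $f$ shifts all its Taylor coefficients up by one, so by \eqref{eq(d15)} this Wronskian equals $[{_s}A_{1}(x),{_s}A_{2}(x),\cdots,{_s}A_{n+1}(x)]^{T}$, exactly as in \eqref{eq(d35)}.

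On the other hand, I would compute the logarithmic derivative of $f$. Using the chain rule together with $(h^{-1})'(y)=1/h'(h^{-1}(y))$, one finds
\begin{eqnarray*}
\frac{d}{dy}\,f(y)=\left(\frac{x}{h'(h^{-1}(y))}-\frac{l'(y)}{l(y)}-\frac{l'(h^{-1}(y))}{h'(h^{-1}(y))\,l(h^{-1}(y))}\right)f(y),
\end{eqnarray*}
so that the bracketed factor is exactly the series whose $k$th derivative at $y=0$ is $xa_{k}+b_{k}+c_{k}$ for the $a_k,b_k,c_k$ named in the statement. Calling this factor $g(y)$, I would invoke Property~III in the form $W_{n}[g(y)f(y)]=P_{n}[f(y)]\,W_{n}[g(y)]$, evaluated at $y=0$. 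Here $P_{n}[f(y)]\big|_{y=0}$ is the lower-triangular matrix with $(i,j)$ entry $\binom{i}{j}{_s}A_{i-j}(x)$ (since $f^{(m)}(0)={_s}A_{m}(x)$ by \eqref{eq(d15)}), while by the linearity in Property~I the column $W_{n}[g(y)]\big|_{y=0}$ splits along the three summands of $g$ into $[xa_{0}+b_{0}+c_{0},\cdots,xa_{n}+b_{n}+c_{n}]^{T}$.

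Equating the two evaluations and reading off the bottom ($n$th) row of the resulting matrix identity then yields
\begin{eqnarray*}
{_s}A_{n+1}(x)=\sum_{k=0}^{n}\binom{n}{k}{_s}A_{n-k}(x)\,(xa_{k}+b_{k}+c_{k}),
\end{eqnarray*}
which is \eqref{eq(d50)}. The only delicate point I anticipate is the bookkeeping in the logarithmic-derivative step: one must verify that the three terms of $g(y)$ reproduce the stated $a_{k},b_{k},c_{k}$ verbatim, and in particular that the composite term feeding $c_{k}$ carries exactly the extra factor $1/h'(h^{-1}(y))$ generated by differentiating $l(h^{-1}(y))$. Unlike Theorem~\ref{thm3.1}, no change of variable via Property~IV and no appeal to the Lemma is required here; the binomial form of the answer is purely a consequence of having factored $f(y)$ into the Pascal matrix at the Property~III step.
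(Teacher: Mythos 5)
Your proposal is correct and takes essentially the same route as the paper's proof: both evaluate $W_{n}\!\left[\frac{d}{dy}\left(\frac{e^{xh^{-1}(y)}}{l(h^{-1}(y))l(y)}\right)\right]\Big|_{y=0}$ in two ways, once as the column $[{_s}A_{1}(x),\cdots,{_s}A_{n+1}(x)]^{T}$ and once by writing the derivative as $g(y)f(y)$ with the same logarithmic-derivative factor $g$ and then invoking Property~III. The only (immaterial) difference is the order of factorization --- you use $P_{n}[f(y)]\big|_{y=0}W_{n}[g(y)]\big|_{y=0}$ whereas the paper uses $P_{n}[g(y)]\big|_{y=0}W_{n}[f(y)]\big|_{y=0}$ --- but Property~III asserts these coincide, and the $n$th row of either product is the same binomial sum \eqref{eq(d50)}.
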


\begin{proof}
Our demonstration of Theorem \ref{thm3.3} begins with
\begin{eqnarray}\label{eq(d51)}
W_{n}\left[\frac{d}{dy}\left(\frac{e^{xh^{-1}(y)}}
{l(h^{-1}(y))l(y)}\right)\right]\Bigg{|}_{y=0},
\end{eqnarray}
which, on the one hand, can be rewritten as follows:
\begin{eqnarray}\label{eq(d52)}
W_{n}\left[\frac{d}{dy}\left(\frac{e^{xh^{-1}(y)}}
{l(h^{-1}(y))l(y)}\right)\right]\Bigg{|}_{y=0}
=[{_s}A_{1}(x), {_s}A_{2}(x),\cdots,{_s}A_{n+1}(x)]^T.
\end{eqnarray}
On the other hand, we can write \eqref{eq(d51)} in the following form:
\begin{align}\label{eq(d53)}
&W_{n}\left[\frac{d}{dy}\left(\frac{e^{xh^{-1}(y)}}
{l(h^{-1}(y))l(y)}\right)\right]\Bigg{|}_{y=0}\notag \\
&\qquad =W_{n}\left[\left(x\frac{1}{h'(h^{-1}(y))}-\frac{l'(y)}{l(y)}
-\frac{1}{h'(h^{-1}(y))}\frac{l'(h^{-1}(y))}{l(h^{-1}(y))}\right)
\frac{e^{xh^{-1}(y)}}{l(h^{-1}(y))l(y)}\right]\Bigg{|}_{y=0},
\end{align}
which, by using Property III, yields
\begin{align*}
&W_{n}\left[\frac{d}{dy}\left(\frac{e^{xh^{-1}(y)}}
{l(h^{-1}(y))l(y)}\right)\right]\Bigg{|}_{y=0}\notag \\
&\qquad =P_{n}\left[x\frac{1}{h'(h^{-1}(y))}-\frac{l'(y)}{l(y)}
-\frac{1}{h'(h^{-1}(y))}\frac{l'(h^{-1}(y))}{l(h^{-1}(y))}\right]
\Bigg{|}_{y=0}W_{n}\left[\frac{e^{xh^{-1}(y)}}{l(h^{-1}(y))l(y)}
\right]\Bigg{|}_{y=0}
\end{align*}
or, equivalently,
\begin{align}\label{eq(d54)}
&W_{n}\left[\frac{d}{dy}\left(\frac{e^{xh^{-1}(y)}}
{l(h^{-1}(y))l(y)}\right)\right]\Bigg{|}_{y=0}\notag \\
&\qquad =
\begin{bmatrix}
xa_{0}+b_0+c_0 & 0 & 0 & \cdots & 0 \\[3pt]
xa_{1}+b_1+c_1 & xa_{0}+b_0+c_0 & 0 & \cdots & 0 \\[3pt]
xa_{2}+b_2+c_2 & \binom{2}{1}(xa_{1}+b_1+c_1) & xa_{0}+b_0+c_0 & \cdots & 0 \\[3pt]
\vdots & \vdots & \vdots & \ddots & \vdots \\[3pt]
xa_{n}+b_n+c_n & \binom{n}{1}(xa_{n-1}+b_{n-1}+c_{n-1})
&\cdots &\cdots & xa_{0}+b_0+c_0 \\
\end{bmatrix}
\notag \\
&\qquad \qquad \qquad \qquad \cdot
\begin{bmatrix}
{_s}A_{0}(x)  \\
{_s}A_{1}(x)  \\
{_s}A_{2}(x)  \\
\vdots  \\
{_s}A_{n-1}(x)  \\
{_s}A_{n}(x)  \\
\end{bmatrix}.
\end{align}
Equating the $n${th} rows of \eqref{eq(d52)} and \eqref{eq(d54)},
we arrive at desired result \eqref{eq(d50)} asserted by Theorem \ref{thm3.3}.
\end{proof}

In its special case when $l(y)=1$ and $b_{k}=c_{k}=0$ $(\forall\; k \geqq 1)$,
the recursive formula \eqref{eq(d50)} asserted by Theorem \ref{thm3.3},
we obtain Corollary \ref{cor3.3} below.

\begin{corollary}\label{cor3.3}
Let ${_q}A_{n}(x)\thicksim(1,h(y))$ be the associated polynomial sequence.
Then
\begin{eqnarray}\label{eq(d55)}
{_q}A_{n+1}(x)=x\sum_{k=0}^{n}\binom{n}{k} a_k ~{_q}A_{n-k}(x),
\end{eqnarray}
where
$$a_{k}=\left(\frac{1}{h'(h^{-1}(y))}\right)^{(k)}\Bigg{|}_{y=0}.$$
\end{corollary}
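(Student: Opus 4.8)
The plan is to derive Corollary \ref{cor3.3} as the specialization $l(y)=1$ of Theorem \ref{thm3.3}, so that the only work is to record how the three coefficient families and the sequence itself degenerate. First I would verify that the substitution is legitimate: the invertibility hypothesis $l(0)\neq 0$ in \eqref{eq(d3)} is satisfied since $l(0)=1$, and with $l(y)\equiv 1$ the prefactor $1/[l(h^{-1}(y))l(y)]$ in the generating function \eqref{eq(d14)} reduces identically to $1$. Consequently the defining expansion collapses to $e^{xh^{-1}(y)}=\sum_{n=0}^{\infty}{_s}A_n(x)\,y^n/n!$, which is exactly \eqref{eq(d5)} with $l\equiv 1$, i.e.\ the generating function of the associated sequence ${_q}A_n(x)\thicksim(1,h(y))$. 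This identifies ${_s}A_n(x)$ with ${_q}A_n(x)$ throughout, so on the left of \eqref{eq(d50)} one has ${_q}A_{n+1}(x)$ and each factor ${_s}A_{n-k}(x)$ becomes ${_q}A_{n-k}(x)$.

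Next I would evaluate the coefficients of Theorem \ref{thm3.3} at $l(y)=1$. Since $l'\equiv 0$, the family $b_k=\bigl(-l'(y)/l(y)\bigr)^{(k)}\big|_{y=0}$ vanishes identically, its numerator being the zero function; and $c_k=\bigl(-l'(h^{-1}(y))/[h'(h^{-1}(y))\,l(h^{-1}(y))]\bigr)^{(k)}\big|_{y=0}$ vanishes for the same reason, since it carries the factor $l'(h^{-1}(y))\equiv 0$. Hence $b_k=c_k=0$ for every $k\geqq 0$. The remaining family $a_k=\bigl(1/h'(h^{-1}(y))\bigr)^{(k)}\big|_{y=0}$ depends only on $h$ and is therefore untouched by the choice of $l$; it is precisely the $a_k$ occurring in \eqref{eq(d55)}.

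Substituting these into the recursion \eqref{eq(d50)}, only the $xa_k$ term survives inside the sum, which gives
\[
{_q}A_{n+1}(x)=\sum_{k=0}^{n}\binom{n}{k}\,(xa_k)\,{_q}A_{n-k}(x)
=x\sum_{k=0}^{n}\binom{n}{k}\,a_k\,{_q}A_{n-k}(x),
\]
the asserted identity \eqref{eq(d55)}. I do not expect any genuine obstacle here: the whole argument is the observation that $l(y)=1$ forces $b_k=c_k=0$, and the single point worth stating carefully is that the reduced generating function $e^{xh^{-1}(y)}$ names the associated sequence for the pair $(1,h(y))$, which follows at once from comparing \eqref{eq(d14)} and \eqref{eq(d5)}. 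Should a self-contained proof be preferred over the specialization, the identity can be re-derived by running the proof of Theorem \ref{thm3.3} verbatim with $l\equiv 1$: one starts from $W_n\bigl[\tfrac{d}{dy}\,e^{xh^{-1}(y)}\bigr]\big|_{y=0}=[{_q}A_1(x),\dots,{_q}A_{n+1}(x)]^T$, uses $\tfrac{d}{dy}\,e^{xh^{-1}(y)}=x\,[h'(h^{-1}(y))]^{-1}\,e^{xh^{-1}(y)}$ (from $(h^{-1})'(y)=1/h'(h^{-1}(y))$), and applies Property III to factor out $P_n[x/h'(h^{-1}(y))]\big|_{y=0}$ before equating the $n$th rows.
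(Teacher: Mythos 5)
Your proposal is correct and follows essentially the same route as the paper, which likewise obtains Corollary \ref{cor3.3} by specializing Theorem \ref{thm3.3} to $l(y)=1$ so that $b_k=c_k=0$. If anything, you are slightly more careful than the paper's one-line reduction: you note that the vanishing holds for all $k\geqq 0$ (including $k=0$, which is needed for the clean form of \eqref{eq(d55)}), and you justify the identification of the reduced generating function $e^{xh^{-1}(y)}$ with the associated sequence ${_q}A_n(x)\thicksim(1,h(y))$.
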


\section{Concluding remarks and observations}
In the preceding sections, we have developed a differential equation
and recurrence relations for the Sheffer-Appell polynomials by using the
Pascal functional and Wronskian matrices. In order to derive these recursive
formulas for the Sheffer-Appell polynomials,
we find several interesting recurrence relations
for such related polynomials as (for example) the
generalized Laguerre polynomials $L_{n}^{(\lambda)}(x)$
and the Miller-Lee type Appell polynomials $G_{n}^{(m)}(x)$.
The results presented in this article
are potentially useful in deducing further interesting formulas
for other specific classes of orthogonal polynomials.

\end{document}